\theoremstyle{plain}
\newtheorem{thm}{\sc \bf{Theorem}}[section]
\newtheorem{lem}[thm]{\sc \bf{Lemma}}
\title{\bf{On Greedy Clique Decompositions and Set Representations of Graphs}\footnote{
Supported partially by the National Science Council under Grants NSC
96-2115-M-029-001 and NSC 96-2115-M-029-007}}
\author{Tao-Ming Wang\footnote{E-mail address: wang@thu.edu.tw}~ and Jun-Lin Kuo \\\\
Department of Mathematics \\
Tunghai University\\
Taichung, Taiwan 40704 }
\date{\today}
\begin{document}
\maketitle

\begin{abstract}
In 1994 S. McGuinness showed that any greedy clique decomposition of
an $n$-vertex graph has at most $\lfloor n^2/4 \rfloor$ cliques (The
greedy clique decomposition of a graph, \textit{J. Graph Theory}
\textbf{18} (1994) 427-430), where a \textit{clique decomposition}
means a clique partition of the edge set and a \textit{greedy clique
decomposition} of a graph is obtained by removing maximal cliques
from a graph one by one until the graph is empty. This result solved
a conjecture by P. Winkler. A \textit{multifamily set
representation} of a simple graph $G$ is a family of sets, not
necessarily distinct, each member of which represents a vertex in
$G$, and the intersection of two sets is non-empty if and only if
two corresponding vertices in $G$ are adjacent. It is well known
that for a graph $G$, there is a one-to-one correspondence between
multifamily set representations and clique coverings of the edge
set.
Further for a graph one may have a one-to-one correspondence between
particular multifamily set representations with intersection size at
most one and clique partitions of the edge set. In this paper, we
study for an $n$-vertex graph the variant of the set representations
using a family of distinct sets, including the greedy way to get the
corresponding clique partition of the edge set of the graph.
Similarly, in this case, we obtain a result that any greedy clique
decomposition of an $n$-vertex graph has at most $\lfloor n^2/4
\rfloor$ cliques.


\end{abstract}

\section{\bf{Background and Introduction}}

By an \textit{multigraph} $M=(V(M),E(M),q)$ we mean a triple
consisting of a set $V(M)$ of \textit{vertices}, a set $E(M)$ of
\textit{edges}, and an integer-valued function $q$ defined on
$V(M)\times V(M)$ in the following way. For each unordered pair
$\{u,v\} \subset V(M)$, let $q(u,v)$ be the number of
\textit{parallel edges} joining $u$ with $v$. If $q(u,v)\neq 0$,
then we say that $\{u,v\}$ is an \textit{edge} of $M$ and $q(u,v)$
is called the \textit{multiplicity} of the edge $\{u,v\}$. For the
main results in this paper, we consider only \textit{finite,
undirected, simple} multigraphs, where \textit{simple} means that
$q(u,v)\leq 1$ for every $\{u,v\}\subset V$ and $q(u,u)=0$ for every
$u\in V(M)$. Therefore we simply call such multigraphs to be
\textit{graphs} for short throughout this article, unless otherwise
stated.

For a vertex subset $S\subseteq V(M)$, $\langle S\rangle_V$ denotes
the \textit{subgraph induced by} $S$. For a vertex $v$ in $M$,
$d_M(v)$ or $d(v)$ denote the \textit{degree} of $v$ in $M$. Let
$\mathcal{F}=\{S_1,...,S_p\}$ be a \textit{family} of distinct
nonempty subsets of a set $X$. Then $\textbf{S}(\mathcal{F})$
denotes the union of sets in $\mathcal{F}$. The \textit{intersection
multigraph} of $\mathcal{F}$, denoted $\Omega(\mathcal{F})$, is
defined by $V(\Omega(\mathcal{F}))=\mathcal{F}$, with $|S_i\cap
S_j|=q(S_i,S_j)$ whenever $i\neq j$. Of course, so long as we are
involved in this paper, $|S_i\cap S_j|$ always equal either $0$ or
$1$ for all $i\neq j$, as appointed above.

We say that a multigraph $M$ is an intersection multigraph on a
family (a multifamily, respectively) $\mathcal{F}$, if there exist a
family (a multifamily, respectively)  $\mathcal{F}$ such that
$M\cong \Omega(\mathcal{F})$. We say that $\mathcal{F}$ is a
\textit{representation} (a \textit{multifamily representation}
respectively) of the multigraph $M$. The \textit{intersection
number}, denoted $\omega (M)$ (\textit{multifamily intersection
number}, denoted $\omega_{m}(M)$, respectively), of a given
multigraph $M$ is the minimum cardinality of a set $X$ such that $M$
is an intersection multigraph (\textit{multifamily intersection
multigraph}, respectively) on a family (a multifamily, respectively)
$\mathcal{F}$ consisting of distinct (not necessarily distinct,
respectively) subsets of $X$. In this case we also say that
$\mathcal{F}$ is a \textit{minimum representation}
(\textit{multifamily representation}, respectively) of $M$.

Note that given a representation $\{S_v\mid v\in V(M)\}$ of $M$ and
a vertex subset $S\subseteq V(M)$, then $\{S_v\mid v\in S\}$ form a
representation of $\langle S \rangle_V$. Thus we know that
$\omega(M)$ is not less than $\omega(\langle S \rangle_V)$ for any
$S\subseteq V(M)$. Similarly for $\omega_{m}(M)$.

In 1966 P. Erd\"{o}s et al. \cite{erdos} proved that the edge set of
any simple graph $G$ with $n$ vertices, no one of which is isolated
vertex, can be partitioned using at most $\lfloor n^2/4 \rfloor$
cliques. In a couple decades S. McGuinness \cite{mcguinness} showed
that any greedy clique partition is such a partition.

A multifamily representation of a graph $G$ is a family of sets each
member of which represent a vertex in $G$ and the intersection
relation of two members of which represent the adjacency of the two
corresponding vertices in $G$. P. Erd\"{o}s et al. \cite{erdos}
suggested a one-one correspondence between multifamily
representations and clique coverings of a graph $G$. In fact, we may
define a \textit{multifamily representation of a multigraph} $M$ to
be a family of sets for which each member represents a vertex in
$M$, and the two vertices are adjacent with $q$ edges in $M$ if and
only if the corresponding representation sets have an intersection
of cardinality $q$. Then there is also a one-one correspondence
between multifamily representations and clique partitions of $M$.

In next section we will narrate this correspondence in full detail.
If a multifamily representation of a multigraph $M$ has pairwise
distinct member sets, then it is called a \textit{representation} of
$M$. And then we turn the correspondence to prove that any
$n$-vertex graph can be represented by at most $\lfloor n^2/4
\rfloor$ elements and we can accomplish such a representation from
any greedy clique partition by a straightforward method based on
this correspondence. In the end, certain future directions will be
mentioned.

\section{\bf{Partition Edge Set by Cliques}}

Given a multigraph $M=(V(M), E(M), q)$, $Q\subseteq V(M)$ is said to
be a \textit{clique} of $M$ if every pair of distinct vertices $u,v$
in $Q$ has $q(u,v)\neq 0$. A \textit{clique partition} $\mathcal{Q}$
of a multigraph is a set of cliques such that every pair of distinct
vertices $u,v$ in $V(M)$ simultaneously appear in exactly $q(u,v)$
cliques in $\mathcal{Q}$ and for each \textit{isolated vertex}, that
is, vertex with no edge incident to it, we need to use at least one
\textit{trivial clique}, that is, clique with only one vertex, in
$\mathcal{Q}$ to cover it. The minimum cardinality of a clique
partition of $M$ is called the \textit{clique partition number} of
$M$, and is denoted by $cp(M)$. This number must exist as the edge
set of $M$ forms a clique partition for $M$. We refer to a clique
partition of $M$ with the cardinality $cp(M)$ as a \textit{minimum
clique partition} of $M$.

Note that a clique partition $\mathcal{Q}$ of $M$ give rise to a
clique partition of $M-v$ by deleting the vertex $v$ from each
clique in $\mathcal{Q}$. Thus $cp(M)$ is not less than the clique
partition number of any induced subgraph of $M$.

%

P. Erd\"{o}s et al. \cite{erdos} proved the following theorem.

\begin{thm}
The edge set of any simple graph $G$ with $n$ vertices no one of
which is isolated vertex can be partitioned using at most $\lfloor
n^2/4 \rfloor$ triangles and edges, and that the complete bipartite
graph $K_{\lfloor n/2 \rfloor, \lceil n/2 \rceil}$ gives equality.
\end{thm}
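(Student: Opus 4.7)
The plan is to handle the extremal example first and then prove the upper bound by induction. For the extremal direction, the graph $K_{\lfloor n/2\rfloor,\lceil n/2\rceil}$ is triangle-free, so any partition of its edge set into triangles and edges must consist of all $\lfloor n^2/4\rfloor$ of its edges taken singly; this exhibits equality in the theorem.

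For the upper bound I would induct on $n$, with the trivial cases $n\le 3$ as the base. For the inductive step let $v\in V(G)$ be a vertex of minimum degree $\delta:=\delta(G)$ and split into two cases. In the low-degree case $\delta\le\lfloor n/2\rfloor$, apply the inductive hypothesis to $G-v$ to partition $E(G-v)$ into at most $\lfloor(n-1)^2/4\rfloor$ pieces and then adjoin the $\delta$ edges incident to $v$ as single-edge cliques; a short parity check gives
\[
\Bigl\lfloor\frac{(n-1)^2}{4}\Bigr\rfloor+\Bigl\lfloor\frac{n}{2}\Bigr\rfloor\;=\;\Bigl\lfloor\frac{n^2}{4}\Bigr\rfloor,
\]
closing this case. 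In the high-degree case $\delta>\lfloor n/2\rfloor$, take a maximum matching $M=\{x_iy_i:1\le i\le m\}$ inside the induced subgraph $G[N(v)]$, add the $m$ triangles $\{v,x_i,y_i\}$ to the partition, peel off the remaining $\delta-2m$ edges from $v$ to the unmatched neighbors (which form an independent set by maximality of $M$) as single edges, and then invoke induction on the residual graph $(G-v)-M$ on $n-1$ vertices. The total count becomes $(\delta-m)+\lfloor(n-1)^2/4\rfloor$, so the job reduces to proving the matching lower bound $m\ge\delta-\lfloor n/2\rfloor$.

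The main obstacle is exactly this matching estimate in the high-degree regime. The key input is that since $\delta(G)\ge\lfloor n/2\rfloor+1$, every $u\in N(v)$ has at least $2\delta-n$ neighbors inside $N(v)$, so $G[N(v)]$ on $\delta$ vertices has minimum degree at least $2\delta-n$. A standard observation, namely that in any graph $H$ an unmatched vertex under a maximum matching has all of its $\ge\delta(H)$ neighbors in the matched set and hence $\nu(H)\ge\min(\lfloor|V(H)|/2\rfloor,\lceil\delta(H)/2\rceil)$, then yields $\nu(G[N(v)])\ge\delta-\lfloor n/2\rfloor$ after a brief arithmetic check, completing the high-degree case and hence the induction.
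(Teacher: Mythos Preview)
Your proof is correct and follows essentially the same route as the Erd\H{o}s--Goodman--P\'osa argument that the paper invokes (and reproduces in modified form in the proof of Theorem~\ref{similarErdos}): induct on $n$, split on whether the minimum degree is at most $\lfloor n/2\rfloor$, and in the high-degree case find a matching of size at least $r=\delta-\lfloor n/2\rfloor$ inside $G[N(v)]$ to turn $r$ pairs of pendant edges into triangles. The only cosmetic difference is that you package the matching step via the general bound $\nu(H)\ge\min\bigl(\lfloor|V(H)|/2\rfloor,\lceil\delta(H)/2\rceil\bigr)$, whereas the paper argues directly that an unmatched neighbour would have too small a degree.
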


We somewhat modify their proof to prove the following theorem. We
use $G^{(n)}$ to denote a graph $G$ with $n$ vertices.

\begin{thm}\label{similarErdos}
Any graph $G$ with $n\geq 4$ vertices (perhaps with isolated
vertices) can be partitioned with at most $\lfloor n^2/4 \rfloor$
cliques $Q_1,...,Q_N$ such that for any two vertices $u,v$ in $G$,
we have

\begin{align*}
\{Q_i\mid u \in Q_i &\in \{Q_1,...,Q_N\}\}\\
\quad &\neq \quad \{Q_i\mid v\in Q_i\in \{Q_1,...,Q_N\}\}\tag{1}.
\end{align*}
Note that in such partition we need only use edges and triangles.
Furthermore, the upper bound $\lfloor n^2/4 \rfloor$ is optimal.
\end{thm}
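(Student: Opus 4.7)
The plan is to prove Theorem \ref{similarErdos} by induction on $n$, following closely the Erd\"{o}s et al.\ template that underlies Theorem 2.1, but interleaving the distinguishing condition (1) at each step. First I would dispose of the base case $n=4$ by inspection of the eleven non-isomorphic $4$-vertex graphs: a naive minimum clique partition creates partition twins only when the graph contains a $K_3$-component or an isolated $K_2$ component, and in each such instance one can split the offending triangle into three edges, or insert a trivial clique on one endpoint of the isolated edge, still staying within the budget $\lfloor 16/4\rfloor = 4$.

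For the inductive step $n\geq 5$ I would pick a vertex $v$ of minimum degree $\delta$ and distinguish three subcases. (a) If $\delta = 0$, apply the inductive hypothesis to $G-v$ and append the trivial clique $\{v\}$, absorbing the single extra clique into the budget gain $\lfloor n^2/4\rfloor - \lfloor (n-1)^2/4\rfloor = \lfloor n/2\rfloor \geq 2$. (b) If $1\leq \delta \leq \lfloor n/2\rfloor$, apply the inductive hypothesis to $G-v$ and append the $\delta$ single-edge cliques $\{v,w\}$ for $w\in N(v)$; the added count is exactly $\delta \leq \lfloor n/2\rfloor$, again matching the same budget gain. (c) The dense case $\delta \geq \lfloor n/2\rfloor + 1$ follows the Erd\"{o}s et al.\ matching device: since $G[N(v)]$ has positive minimum degree in this regime, one extracts a matching $M\subseteq G[N(v)]$ and replaces $2|M|$ single-edge cliques at $v$ with $|M|$ triangles through $v$, trading the surplus so that the total stays within $\lfloor n^2/4\rfloor$.

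Throughout, condition (1) is enforced by a disjointness-of-indices bookkeeping: in cases (a)--(b), every clique of the IH partition of $G-v$ is disjoint from each new clique containing $v$, so $v$'s membership set is made entirely of fresh indices, every other vertex $u$ of $G-v$ carries at least one old (IH) index (since every vertex of $G-v$, non-isolated or isolated, lies in at least one IH clique), distinct neighbors of $v$ gain distinct fresh indices, and non-neighbors of $v$ inherit their IH-distinct sets; an analogous argument covers (c). Optimality is witnessed by $K_{\lfloor n/2\rfloor,\lceil n/2\rceil}$: being triangle-free, its only clique partition consists of its $\lfloor n^2/4\rfloor$ single edges, and (1) holds trivially in that partition.

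I expect the main obstacle to lie in the dense case (c): choosing $v$ and the matching $M$ so that edges of $M$ either sit as singleton cliques in the IH partition of $G-v$ or can be locally swapped out of larger triangles without inflating the count, and verifying that condition (1) survives these local swaps rather than mere disjoint appendings, is the technical crux that the Erd\"{o}s et al.\ argument must be adapted to absorb.
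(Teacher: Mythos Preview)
Your outline matches the paper's proof almost exactly: induction on $n$, base case $n=4$ by inspection, the split into a low-degree case (your (a) and (b)) and a dense case (your (c)) via the Erd\H{o}s--Goodman--P\'osa matching device, and optimality via $K_{\lfloor n/2\rfloor,\lceil n/2\rceil}$.

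The obstacle you flag in (c) is, however, self-inflicted and the paper sidesteps it entirely. You propose to apply the inductive hypothesis to $G-v$ and then retrofit the matching edges into triangles through $v$; this forces you to worry about whether those edges sit as singleton cliques in the IH partition or are buried inside triangles. The paper instead first finds the matching $M$ of size $r=\delta-\lfloor n/2\rfloor$ inside $G[N(v)]$, \emph{deletes} those $r$ edges from $G-v$, and only then applies the inductive hypothesis to $(G-v)-M$. The $r$ triangles $\{v,y_{2i-1},y_{2i}\}$ and the $\delta-2r$ remaining pendant edges are now genuinely fresh cliques disjoint from the IH partition, and the count is
\[
\bigl\lfloor (n-1)^2/4\bigr\rfloor + r + (\delta-2r)=\bigl\lfloor (n-1)^2/4\bigr\rfloor+\lfloor n/2\rfloor=\bigl\lfloor n^2/4\bigr\rfloor.
\]
No swapping or local surgery is needed.

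Two smaller points. First, ``$G[N(v)]$ has positive minimum degree'' only guarantees one matching edge; you actually need a matching of size exactly $r$, and the paper obtains it by the standard greedy extension argument (an unmatched $y_{2r-1}$ would have degree at most $(2r-2)+(n-t)<\lfloor n/2\rfloor+r$, a contradiction). Second, after deleting $M$ from $G-v$ some vertices may become isolated; the paper's convention that isolated vertices receive a trivial clique in the IH partition is precisely what keeps their membership sets nonempty and hence distinct from $v$'s all-fresh set, so condition~(1) survives. Your disjoint-index bookkeeping for (1) is otherwise sound and in fact more explicit than the paper's, which simply asserts that (1) is preserved.
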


\begin{proof}
For $n=4$, it is easy to check the theorem holds for the 11
different graphs on 4 vertices. (Please see Figure~\ref{v4})

\begin{figure}[h]
\centering
\includegraphics[width=0.7\textwidth]{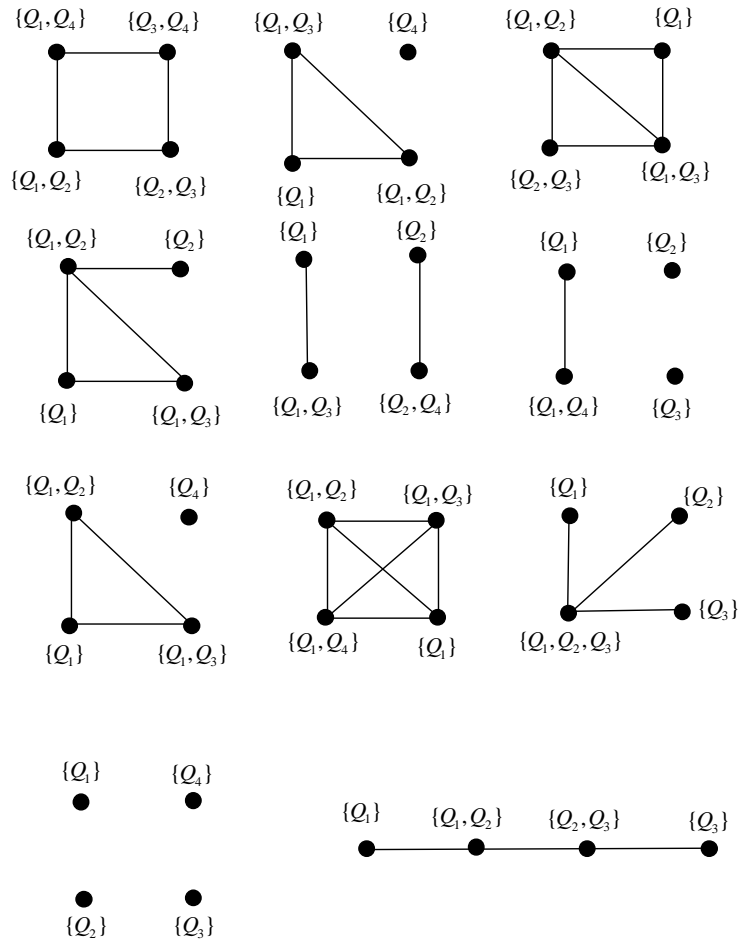}
\caption{The 11 Non-Isomorphic Graphs on 4 Vertices with
Corresponding Family Representations}\label{v4}
\end{figure}

We proceed by mathematical induction from $n=4$. First note that
given any positive integer $n$,
\begin{align*}
\lfloor n^2/4\rfloor = \lfloor (n-1)^2/4\rfloor + \lfloor
n/2\rfloor.
\end{align*}

Hence we have to show that from $G^{(n-1)}$ to $G^{(n)}$, at most
$\lfloor n/2\rfloor$ more cliques are needed. We have the following
cases:

{\bf{Case~1}}: In case $G^{(n)}$ has a vertex $v$ of degree $\leq
\lfloor n/2\rfloor$, then first we delete $v$ and all edges incident
with $v$. Then by induction hypothesis, we partition the resulting
graph with at most $\lfloor (n-1)^2/4\rfloor$ cliques $K_2$ or
$K_3$. Then from $G^{(n-1)}$ to $G^{(n)}$ we need only to use the
edges joining the deleted vertex $v$ to other vertices of $G^{(n)}$,
and then give rise to at most $\lfloor n/2\rfloor$ more cliques as
$K_2$. Clearly the resulting clique partition of $G^{(n)}$ still
satisfies (1).

{\bf{Case~2}}: On the contrary, every vertex of $G^{(n)}$ is of
degree $>\lfloor n/2\rfloor$. Let $x$ be the vertex with the minimum
degree $t$, and set $t=\lfloor n/2\rfloor +r$, where $r>0$. Let $x$
be adjacent to the vertices $y_1,...,y_t$ and $G^{(t)}$ be the
subgraph of $G^{(n)}$ induced by $\{y_1,...,y_t\}$.

We claim that $G^{(t)}$ has $r$ edges and no two of which have a
common vertex. Assume that $G^{(t)}$ has only $r-1$ such edges (note
that it is similar to show the case $G^{(t)}$ has less than $r-1$
such edges), say
\begin{align*}
\{y_1,y_2\},\{y_3,y_4\},...,\{y_{2r-3},y_{2r-2}\}.
\end{align*}
By $t=\lfloor n/2\rfloor +r=d(x)\leq n-1$, we know that $r\leq
\lfloor n/2\rfloor$ and thus $t\geq 2r$. Thus we may pick $y_{2r-1}$
from $\{y_1,...,y_t\}$.

By hypothesis, $y_{2r-1}$ has degree $\geq \lfloor n/2\rfloor +r$.
But it could be adjacent to at most $2r-2$ of the vertices
$y_1,...,y_{2r-2}$ and to at most $n-t$ of the vertices not in
$G^{(t)}$, hence the degree of $y_{2r-1}$ is at most
\begin{align*}
(2r-2)+(n-t)&=(2r-2)+(n-(\lfloor n/2\rfloor +r))\\
&=(n-\lfloor n/2\rfloor -2)+r\\
&<\lfloor n/2\rfloor+r.
\end{align*}

However note that $\lfloor n/2\rfloor +r$ is the minimum degree,
hence $y_{2r-1}$ is adjacent to some other vertex, say $y_{2r}$, in
$G^{(t)}$ and
\begin{align*}
\{y_1,y_2\},\{y_3,y_4\},...,\{y_{2r-3},y_{2r-2}\},\{y_{2r-1},y_{2r}\}
\end{align*}
are $r$ edges in $G^{(t)}$ and no two of which have a common vertex.

We remove these $r$ edges from $G^{(n)}-x$. Partition the resulting
graph with at most $\lfloor (n-1)^2/4\rfloor$ cliques and (1) is
satisfied. Then the $\lfloor (n-1)^2/4\rfloor$ cliques together with
the triangles
\begin{align*}
\{x,y_1,y_2\},\{x,y_3,y_4\},...,\{x,y_{2r-1},y_{2r}\}
\end{align*}
and the edges
\begin{align*}
\{x,y_k\}, \mbox{ where } 2r+1\leq k\leq t,
\end{align*}
form a clique partition, which uses at most
\begin{align*}
\lfloor (n-1)^2/4&\rfloor +r+(t-2r)\\
&=\lfloor (n-1)^2/4\rfloor -r+(\lfloor n/2\rfloor +r)\\
&=\lfloor n^2/4\rfloor
\end{align*}
cliques.

Note that according to our convention in this paper, we need to use
at least one trivial clique, even for each isolated vertex in the
clique partition of the graph $G^{(n)}-x$ with the $r$ edges
\begin{align*}
\{y_1,y_2\},\{y_3,y_4\},...,\{y_{2r-3},y_{2r-2}\},\{y_{2r-1},y_{2r}\}
\end{align*}
removed. Thus the resulting clique partition of $G^{(n)}$, obtained
from that of $G^{(n)}-x$ with the $r$ edges removed, must agree with
the requirement (1) of our theorem in the respect that for any two
vertices $u,v$ in $G^{(n)}$,
\begin{align*}
\{Q_i\mid u \in Q_i &\in \{Q_1,...,Q_N\}\}\\
\quad &\neq \quad \{Q_i\mid v\in Q_i\in \{Q_1,...,Q_N\}\}.
\end{align*}

Last we show that the number $\lfloor n^2/4\rfloor$ cannot be
replaced by any smaller number by giving the following example. Let
$n=2k$ or $2k+1$, we consider the complete bipartite graphs
$K_{k,k}$ and $K_{k,k+1}$, which have $2k$ and $2k+1$ vertices,
respectively. Clearly these two graphs have no triangle and their
numbers of edges are
\begin{align*}
k^2=\lfloor (2k)^2/4\rfloor=\lfloor n^2/4\rfloor, \mbox{ if } n=2k,
\end{align*}
and
\begin{align*}
k(k+1)=\lfloor (2k+1)^2/4\rfloor=\lfloor n^2/4\rfloor, \mbox{ if }
n=2k+1.
\end{align*}

Hence $K_{k,k}$ and $K_{k,k+1}$ always require $\lfloor
n^2/4\rfloor$ cliques for a clique partition.
\end{proof}

Now we introduce the one-to-one correspondence between multifamily
representations and clique partitions of a multigraph $M$ as
following.

Given a multigraph $M^{(n)}=(V(M),E(M),q)$, we first construct a
clique partition
\begin{align*}
\mathcal{Q}=\{Q_1,...,Q_p\}
\end{align*}
Then with each clique $Q_k$ we associate an element $e_k$ and with
each vertex $v_{\alpha}$ we associate a set
$S_{\mathcal{Q}}(v_\alpha)$ of elements $e_k$, where
\begin{align*}
e_k\in S_{\mathcal{Q}}(v_\alpha) \Leftrightarrow v_\alpha \in Q_k,
\end{align*}
i.e., $S_{\mathcal{Q}}(v_\alpha)$ is the collection of elements for
which the corresponding cliques contain $v_\alpha$. Thus we obtain
\begin{align*}
\mathcal{F}(\mathcal{Q})\equiv\{S_{\mathcal{Q}}(v):v\in V(M)\}.
\end{align*}
Then clearly
\begin{align*}
\textbf{S}(\mathcal{F}(\mathcal{Q}))\equiv \bigcup_{v\in
V(M)}S_\mathcal{Q}(v)
\end{align*}
contains $p$ elements. And
\begin{align*}
|S_\mathcal{Q}(v_\alpha)\cap
S_{\mathcal{Q}}(v_{\beta})|=q(v_{\alpha},v_{\beta}),
\end{align*}
since there is exactly $q(v_\alpha,v_\beta)$ cliques simultaneously
containing the two vertices $v_\alpha,v_\beta$. Thus we have
constructed a multifamily representation
$$\mathcal{F}(\mathcal{Q})=\{S_{\mathcal{Q}}(v):v\in V(M)\}$$ from the clique partition $\mathcal{Q}$ of $M$, where
$$|\textbf{S}(\mathcal{F}(\mathcal{Q}))|\equiv |\bigcup_{v\in V(M)}S_\mathcal{Q}(v)|=p=|\mathcal{Q}|.$$

Conversely, given a multifamily representation
$\mathcal{F}=\{S_1,...S_n\}$ of $M$ with vertex set
$V(M)=\{v_1,...,v_n\}$, where $S_\alpha$ correspond to the set
attaching to $v_\alpha$, then we can also construct a clique
partition of $M$ by the following way.

Let $$\textbf{S}(\mathcal{F})\equiv
\bigcup_{\alpha=1}^nS_\alpha=\{e_1,...,e_p\}.$$ For each fixed $e_k$
in $\textbf{S}(\mathcal{F})$ we form a clique $Q_\mathcal{F}(e_k)$
using those vertices $v_\alpha$ such that the set $S_\alpha$
attaching to it contains $e_k$. Clearly each $Q_\mathcal{F}(e_k)$ is
indeed a clique of $M$. Thus we obtain
$$\mathcal{Q}(\mathcal{F})=\{Q_\mathcal{F}(e_1),...,Q_\mathcal{F}(e_p)\}.$$
And
\begin{align*}
q(v_\alpha,v_\beta)&=|S_\alpha \cap S_\beta|\\
= \mbox{the number of cliques in } \mathcal{Q}(\mathcal{F}) &\mbox{
simultaneously containing } v_\alpha,v_\beta,
\end{align*}
since each element in $S_\alpha$ exactly represents a clique in
$\mathcal{Q}(\mathcal{F})$ containing $v_\alpha$. Thus we have
constructed a clique partition $\mathcal{Q}(\mathcal{F})$ of $M$
from the multifamily representation $\mathcal{F}$ of $M$, where
$$|\mathcal{Q}(\mathcal{F})|=p=|\bigcup_{\alpha=1}^nS_\alpha|\equiv |\textbf{S}(\mathcal{F})|.$$

Thus we have established a one-one correspondence between
multifamily representations and edge clique partitions of the
multigraph $M$.

From above we know that $\omega_m(M)=cp(M)$. In particular we may
consider the simple graphs as special classes of multigraphs:

\begin{thm}
Let $G$ be a graph. Then we have $\omega_m(G)=cp(G)$.
\end{thm}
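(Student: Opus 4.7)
The plan is to simply invoke the one-to-one correspondence between multifamily representations and clique partitions that was built up immediately before the statement of the theorem, applied to the special case where $G$ is a simple graph viewed as a multigraph with $q(u,v) \in \{0,1\}$ for all pairs of vertices and $q(u,u)=0$. The proof will consist of two inequalities, $\omega_m(G) \le cp(G)$ and $cp(G) \le \omega_m(G)$, each produced by one direction of the correspondence.

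For the direction $\omega_m(G) \le cp(G)$, I would start with a minimum clique partition $\mathcal{Q} = \{Q_1,\dots,Q_p\}$ of $G$, where $p = cp(G)$, and apply the first half of the correspondence to obtain the multifamily $\mathcal{F}(\mathcal{Q}) = \{S_{\mathcal{Q}}(v) : v \in V(G)\}$. Because $q(v_\alpha, v_\beta) \in \{0,1\}$, the identity $|S_{\mathcal{Q}}(v_\alpha)\cap S_{\mathcal{Q}}(v_\beta)| = q(v_\alpha,v_\beta)$ forces intersections of size at most one, so $\mathcal{F}(\mathcal{Q})$ is a multifamily representation of $G$ on a ground set of size $|\textbf{S}(\mathcal{F}(\mathcal{Q}))| = p$. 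Hence $\omega_m(G) \le p = cp(G)$. I will note that trivial cliques introduced for isolated vertices are precisely what guarantees $S_{\mathcal{Q}}(v)$ is nonempty even when $v$ has no neighbor, so each vertex receives a legitimate representing set.

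For the reverse direction $cp(G) \le \omega_m(G)$, I would take a minimum multifamily representation $\mathcal{F} = \{S_1,\dots,S_n\}$ of $G$ on a ground set $\textbf{S}(\mathcal{F}) = \{e_1,\dots,e_p\}$ with $p = \omega_m(G)$, and apply the second half of the correspondence to get $\mathcal{Q}(\mathcal{F}) = \{Q_{\mathcal{F}}(e_1),\dots,Q_{\mathcal{F}}(e_p)\}$. The identity $q(v_\alpha,v_\beta) = |S_\alpha \cap S_\beta|$ equals the number of cliques in $\mathcal{Q}(\mathcal{F})$ that simultaneously contain $v_\alpha$ and $v_\beta$, so $\mathcal{Q}(\mathcal{F})$ is a clique partition of $G$ of size $p$. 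This yields $cp(G) \le p = \omega_m(G)$.

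The only subtlety—and the closest thing to an obstacle—is the bookkeeping around isolated vertices: I must confirm both that every vertex in a multifamily representation receives a nonempty set (otherwise an isolated vertex might slip through uncovered) and that every isolated vertex in the clique partition is assigned a trivial clique (which is built into the definition of clique partition used in this paper). Once these conventions are checked, the two inequalities combine to give $\omega_m(G) = cp(G)$, and the theorem falls out as a direct corollary of the multigraph correspondence specialized to the simple case.
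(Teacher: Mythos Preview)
Your proposal is correct and matches the paper's approach exactly: the paper establishes the size-preserving correspondence $|\textbf{S}(\mathcal{F}(\mathcal{Q}))|=|\mathcal{Q}|$ and $|\mathcal{Q}(\mathcal{F})|=|\textbf{S}(\mathcal{F})|$ in the paragraphs immediately preceding the theorem, then simply declares ``From above we know that $\omega_m(M)=cp(M)$'' and specializes to simple graphs. Your two-inequality argument is just this correspondence applied to minimum instances, written out in slightly more detail than the paper bothers with.
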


If we are given a graph $G^{(n)}$, then by
Theorem~\ref{similarErdos} we may obtain a clique partition
$\mathcal{Q}$ with cardinality less or equal to $\lfloor
n^2/4\rfloor$, agreeing with the requirement (1). Then by the above
method we may obtain a representation
$\mathcal{F}(\mathcal{Q})=\{S_\mathcal{Q}(v):~v\in V(G)\}$ of $G$
consisting of distinct sets. Thus we have the following theorem.

\begin{thm}\label{uppbound}
Let $G$ be a graph. Then $\omega(G^{(n)})\leq \lfloor n^2/4\rfloor$.
\end{thm}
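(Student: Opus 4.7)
The plan is to combine Theorem~\ref{similarErdos} with the clique-partition/multifamily-representation correspondence described immediately before the statement, and then leverage condition (1) to upgrade a multifamily representation into a genuine (distinct-set) representation.

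First I would handle the small cases $n\leq 3$ separately: for $n\in\{1,2,3\}$ the bound $\lfloor n^2/4\rfloor$ is $0$ or $1$ or $2$, and one can exhibit distinct-set representations by hand (for instance, $K_3$ itself is represented by the three edges as singletons). For $n\geq 4$, I would invoke Theorem~\ref{similarErdos} to obtain a clique partition
\begin{align*}
\mathcal{Q}=\{Q_1,\dots,Q_N\}
\end{align*}
of $G^{(n)}$ with $N\leq \lfloor n^2/4\rfloor$ and with the extra property~(1), namely that no two vertices lie in exactly the same set of cliques of $\mathcal{Q}$.

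Next I would feed this $\mathcal{Q}$ into the correspondence spelled out in Section~2. For each clique $Q_k$ assign a fresh element $e_k$, and for each $v\in V(G)$ set
\begin{align*}
S_{\mathcal{Q}}(v)=\{e_k : v\in Q_k\}.
\end{align*}
The computation in the excerpt shows that $|S_{\mathcal{Q}}(u)\cap S_{\mathcal{Q}}(v)|=q(u,v)$, so $\mathcal{F}(\mathcal{Q})=\{S_{\mathcal{Q}}(v):v\in V(G)\}$ is a multifamily representation of $G$, built from a ground set of size exactly $N\leq \lfloor n^2/4\rfloor$.

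The step that requires condition~(1) is the distinctness of the sets $S_{\mathcal{Q}}(v)$. By construction, $S_{\mathcal{Q}}(u)=S_{\mathcal{Q}}(v)$ if and only if the family of cliques in $\mathcal{Q}$ containing $u$ coincides with that containing $v$, which is precisely what (1) forbids. Hence $\mathcal{F}(\mathcal{Q})$ is a family of \emph{distinct} sets and therefore a representation of $G$ in the strict sense used to define $\omega$. Consequently
\begin{align*}
\omega(G^{(n)})\leq |\mathbf{S}(\mathcal{F}(\mathcal{Q}))|=N\leq \lfloor n^2/4\rfloor.
\end{align*}
The only real obstacle is the distinctness requirement, and that was already built into Theorem~\ref{similarErdos} via condition~(1); once that theorem is in hand, the rest is essentially a translation through the correspondence. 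The use of trivial cliques for isolated vertices, emphasized in Section~2, is what lets condition~(1) be met even in the presence of vertices of degree zero, so no separate argument for isolated vertices is needed.
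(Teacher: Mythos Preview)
Your argument for $n\geq 4$ is exactly the paper's: apply Theorem~\ref{similarErdos} to get a clique partition $\mathcal{Q}$ of size at most $\lfloor n^2/4\rfloor$ satisfying condition~(1), pass through the correspondence to obtain $\mathcal{F}(\mathcal{Q})$, and observe that~(1) is precisely the statement that the sets $S_{\mathcal{Q}}(v)$ are pairwise distinct. That part is correct and matches the paper verbatim.

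The gap is in your treatment of $n\leq 3$. You assert that these cases can be done ``by hand,'' but in fact the inequality $\omega(G^{(n)})\leq\lfloor n^2/4\rfloor$ is \emph{false} for small $n$. For $K_3$ one has $\lfloor 9/4\rfloor=2$, yet any family of three distinct nonempty subsets of a $2$-element set must contain a disjoint pair (the only candidates are $\{a\},\{b\},\{a,b\}$, and $\{a\}\cap\{b\}=\emptyset$), so $\omega(K_3)=3>2$. Similarly $\omega(K_2)=2>\lfloor 4/4\rfloor=1$, and a single isolated vertex already needs one element while $\lfloor 1/4\rfloor=0$. Your parenthetical ``$K_3$ is represented by the three edges as singletons'' describes a representation on three elements, not two, and hence does not establish the claimed bound. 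The paper avoids this issue only by tacitly inheriting the hypothesis $n\geq 4$ from Theorem~\ref{similarErdos}; you should do the same rather than claim the small cases go through.
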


Again considering the two complete bipartite graphs $K_{k,k}$ and
$K_{k,k+1}$, one can easily see that the bound $\lfloor
n^2/4\rfloor$ in Theorem~\ref{uppbound} is sharp.

\section{\bf {Greedy Clique Decomposition of Graphs}}
One may not be satisfied with the above theorem and would rather ask
that how to obtain a representation of $G^{(n)}$ using at most
$\lfloor n^2/4\rfloor$ elements.
S. McGuinness \cite{mcguinness} proved the following theorem, which
solved a conjecture by P. Winkler~\cite{win}:

\begin{thm}
Every greedy clique decomposition of an $n$-vertex graph uses at
most $\lfloor n^2/4\rfloor$ cliques.
\end{thm}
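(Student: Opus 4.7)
The plan is to mimic the induction on $n$ used in Theorem~\ref{similarErdos}, working directly with an arbitrary greedy decomposition $\mathcal{Q}=\{Q_1,\ldots,Q_N\}$ of $G^{(n)}$. The identity $\lfloor n^2/4\rfloor=\lfloor (n-1)^2/4\rfloor+\lfloor n/2\rfloor$ reduces matters to exhibiting a single vertex $v$ that lies in at most $\lfloor n/2\rfloor$ cliques of $\mathcal{Q}$ and whose removal leaves a greedy clique decomposition of a graph on $n-1$ vertices, to which the inductive hypothesis applies. The base cases $n\leq 4$ are direct (cf.\ Figure~\ref{v4}).

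Case 1 (low minimum degree): suppose some $v\in V(G)$ has $d(v)\leq \lfloor n/2\rfloor$. Because the cliques in $\mathcal{Q}$ are pairwise edge-disjoint, each clique containing $v$ uses at least one distinct edge incident to $v$, so at most $\lfloor n/2\rfloor$ cliques of $\mathcal{Q}$ contain $v$. The remaining members of $\mathcal{Q}$ are all cliques of $G-v$; one then argues that, taken in the order they appear in $\mathcal{Q}$, they are still maximal in the appropriate residual subgraphs of $G-v$. The reason this is plausible is that $G-v$ is an induced subgraph of $G$, so any extension of $Q_i\not\ni v$ to a larger clique inside $G-v$ would also extend it inside $G$ and contradict the greedy maximality used originally.

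Case 2 (high minimum degree): otherwise every vertex has degree $>\lfloor n/2\rfloor$. Let $x$ have minimum degree $t=\lfloor n/2\rfloor+r$ with $r\geq 1$. The degree-counting already carried out in Theorem~\ref{similarErdos} forces $r$ vertex-disjoint edges $\{y_{2i-1},y_{2i}\}$ inside $N(x)$, so that $\{x,y_{2i-1},y_{2i}\}$ spans a triangle of $G$. The aim is to argue that maximality rules out the greedy process picking an edge $\{x,y_{2i-1}\}$ or $\{x,y_{2i}\}$ alone while the corresponding triangle is still intact, so that each of the $r$ pairs is absorbed into a clique of size $\geq 3$ containing $x$. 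If so, $x$ lies in at most $t-r=\lfloor n/2\rfloor$ cliques of $\mathcal{Q}$, and we can delete $x$ and invoke induction.

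The main obstacle lies in verifying that the restricted sequence really does form a greedy decomposition of $G-v$ (resp.\ $G-x$). A clique $Q_j\ni v$ with $|Q_j|\geq 3$ covers $\binom{|Q_j|-1}{2}$ edges of $G-v$, and these are removed from the residual graph before later $v$-avoiding cliques are chosen; consequently a clique $Q_i\not\ni v$ that was maximal in the residual graph of $G$ at its step may fail to be maximal in the corresponding residual graph of $G-v$ at the moment one reaches it. Resolving this slippage --- either by rearranging the restricted sequence into an honestly greedy one of no greater length, or by amortizing the defect against the $v$-containing cliques that are themselves larger than edges --- is the technical heart of McGuinness's argument, and is the step I expect to require the most care.
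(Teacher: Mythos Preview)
Your proposal has a genuine gap, and you misidentify where the real work lies. You correctly note the obstacle with single-vertex deletion: removing a vertex $v$ together with all $Q_j\ni v$ does \emph{not} leave a greedy decomposition of $G-v$, since each such $Q_j$ of size $\ge 3$ also covers $\binom{|Q_j|-1}{2}$ edges of $G-v$, and later $v$-avoiding cliques were chosen maximal only relative to a residual graph with those edges already gone. The same defect afflicts Case~2 (quite apart from the fact that your bound ``$x$ lies in at most $\lfloor n/2\rfloor$ cliques'' is only a hope, not an argument). You then say that resolving this slippage is ``the technical heart of McGuinness's argument.'' It is not: McGuinness --- and the paper, in its proof of Theorem~\ref{similarMcGuinness} --- never attempts single-vertex deletion at all.

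The actual induction goes from $n-2$ to $n$. If every $Q_i$ contains at least three edges, then $|\textbf{Q}|\le\binom{n}{2}/3<\lfloor n^2/4\rfloor$ outright. Otherwise some $Q_j$ is a single edge $xy$, and one deletes \emph{both} endpoints. Writing $R$ (resp.\ $S$) for the members of $\textbf{Q}\setminus\{Q_j\}$ through $x$ (resp.\ $y$), the list $\textbf{Q}'=\textbf{Q}\setminus(R\cup S\cup\{Q_j\})$ is a greedy decomposition of the $(n-2)$-vertex graph $G'=(G-x-y)-\bigcup_{Q\in R\cup S}E(Q)$, with no slippage whatsoever: deleting any subcollection of cliques from a greedy list always leaves a greedy list for the graph with exactly those edges removed. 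Induction gives $|\textbf{Q}'|\le\lfloor(n-2)^2/4\rfloor$, and since $\lfloor n^2/4\rfloor-\lfloor(n-2)^2/4\rfloor=n-1$, it remains only to show $|R\cup S|\le n-2$. That bound is where the greedy order does the real work: one assigns to each clique in $R\cup S$ a vertex of $V(G)\setminus\{x,y\}$, and the maximality of $xy$ and of earlier cliques at the moments they were chosen forces the assignment to be injective. The Case~1/Case~2 dichotomy and the matching argument borrowed from Theorem~\ref{similarErdos} play no role.
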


In the theorem, the so-called \textit{clique decomposition} is a
clique partition of the edge set, and \textit{greedy clique
decomposition} of a graph $G^{(n)}$ means an ordered set
$\textbf{Q}=\{Q_1,...,Q_m\}$ such that each $Q_i$ is a maximal
clique in $G-\bigcup_{j<i}E(Q_j)$, where $G-\bigcup_{j<i}E(Q_j)$ is
the subgraph of $G$ obtained by deleting all edges in the edge
subset $\bigcup_{j<i}E(Q_j)$ while leaving all vertices in $G$
preserved.

For a representation $\mathcal{F}$ of $G$, we referred as
\textit{monopolized elements} to those elements in
$\textbf{S}(\mathcal{F})$ which appear in only one member of
$\mathcal{F}$. Here we prove the following main theorem, as a
variant of S. McGuinness's result:

\begin{thm}\label{similarMcGuinness}
Every representation $\mathcal{F}$ of $G^{(n)}$ with $n\geq 4$
obtained from $\mathcal{F}(\textbf{Q})$, where $\textbf{Q}$ is any
greedy clique decomposition of $G^{(n)}$ by successively attaching
monopolized elements to the sets which repetitiously occur in
$\mathcal{F}(\textbf{Q})$,
uses at most $\lfloor n^2/4\rfloor$ elements.
\end{thm}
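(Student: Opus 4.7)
The plan is to bound the total count $|\mathcal{F}|=m+d$, where $m=|\textbf{Q}|$ is the number of greedy cliques and $d$ is the number of monopolized elements attached to resolve duplicate sets. Partition the vertices of $G^{(n)}$ into equivalence classes of identical $S_{\textbf{Q}}$-values, and let $c_1,\ldots,c_t$ denote the sizes of the non-singleton classes; then $d=\sum_{l}(c_l-1)$. The key structural observation is that whenever $S_{\textbf{Q}}(u)=S_{\textbf{Q}}(v)$ with $u\neq v$, the vertices $u$ and $v$ appear in exactly the same cliques of $\textbf{Q}$, so every clique of $\textbf{Q}$ that meets a non-singleton class contains the whole class and hence has cardinality at least $c_l$. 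In particular, large duplicate classes can coexist only with few large cliques in $\textbf{Q}$, which is the tension that should absorb the monopolized elements.

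With this in hand, I would proceed by induction on $n$, paralleling the proof of Theorem~\ref{similarErdos}. The base case $n=4$ is verified by inspecting the eleven graphs of Figure~\ref{v4} together with all their possible greedy decompositions, with $K_4$ providing the only tight instance ($m=1$, $d=3$). For the inductive step, I split into the two cases of Erd\"{o}s et al. In Case~1 a vertex $v$ with $d(v)\leq\lfloor n/2\rfloor$ is removed; after arguing that the relevant restriction is (or may be chosen to be) a greedy decomposition of $G^{(n)}-v$, the induction hypothesis gives a bound of $\lfloor(n-1)^2/4\rfloor$ for the subgraph, and the edges incident to $v$ are reattached using at most $\lfloor n/2\rfloor$ additional elements (cliques together with any new monopolized tokens occasioned by $v$). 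In Case~2 every vertex has degree exceeding $\lfloor n/2\rfloor$, and one applies the matching argument of Theorem~\ref{similarErdos} in the neighborhood of a minimum-degree vertex $x$ to produce $r$ triangles through $x$ and $t-2r$ trailing edges; the arithmetic $\lfloor(n-1)^2/4\rfloor+r+(t-2r)=\lfloor n^2/4\rfloor$ settles the clique count, and one verifies that the sets $S_{\textbf{Q}}(y_j)$ involved in this step remain mutually distinguishable without demanding monopolized tokens beyond those already admitted.

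The main obstacle is that a greedy clique decomposition of $G^{(n)}$ does not in general restrict to a greedy clique decomposition of $G^{(n)}-v$, so the inductive step cannot invoke the hypothesis verbatim. Overcoming this requires either rearranging the greedy sequence so that all cliques containing $v$ appear at the end (exploiting the freedom in choosing the maximal cliques at each step) or strengthening the induction hypothesis to apply to any decomposition in which every clique is maximal at its moment of selection in the current subgraph, a class that is preserved under vertex deletion. A subsidiary difficulty in Case~1 is to track how reattaching $v$ can merge or create duplicate classes: one must show that each new coincidence of the form $S_{\mathcal{F}}(v)=S_{\mathcal{F}}(w)$ is forced by an additional clique in the reattachment stage that was already counted against the $\lfloor n/2\rfloor$ degree budget, so the monopolized token required costs no more than one of those reserved cliques.
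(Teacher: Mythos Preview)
Your proposal has a genuine gap, and it is precisely the obstacle you yourself flag at the end. Neither of your two suggested fixes works. Rearranging the greedy sequence so that all cliques through $v$ come last would change $\textbf{Q}$, so you would no longer be bounding the given representation; and in any case such a rearrangement need not exist. Your second suggestion, strengthening the hypothesis to ``decompositions in which every clique is maximal at its moment of selection,'' is not a strengthening at all---that is the definition of greedy---and the class is \emph{not} preserved under deleting a single vertex: if $v\in Q_i$ and some $w\notin Q_i$ is adjacent to every vertex of $Q_i\setminus\{v\}$ but not to $v$, then $Q_i$ was maximal at step $i$ while $Q_i\setminus\{v\}$ is not maximal in $(G-v)-\bigcup_{j<i}E(Q_j-v)$. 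Your Case~2 has the same defect in a different guise: the matching-of-triangles argument from Theorem~\ref{similarErdos} \emph{constructs} a decomposition, whereas here $\textbf{Q}$ is handed to you and you must bound it as given.

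The paper avoids this obstacle by abandoning the Erd\H{o}s--Goodman--P\'osa one-vertex scheme entirely and following McGuinness. First it disposes of the case where every $Q_j$ has at least three edges by a direct count: then $m<n^2/6$, and a careful tally (using Lemma~\ref{mainlemma}) shows the monopolized elements never push the total past $\lfloor n^2/4\rfloor$. Otherwise some $Q_j$ is a single edge $xy$. The induction then removes \emph{both} endpoints together with all cliques $R\cup S$ of $\textbf{Q}$ that meet $\{x,y\}$. Because the surviving cliques avoid $x$ and $y$ altogether, $\textbf{Q}'=\textbf{Q}\setminus(R\cup S\cup\{Q_j\})$ is genuinely a greedy decomposition of the residual $(n-2)$-vertex graph, so the inductive hypothesis applies directly. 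One then checks that every monopolized element already in $\mathcal{F}$ is still needed for $\mathcal{F}(\textbf{Q}')$, and finishes with the McGuinness selection argument showing $|R\cup S|\le n-2\le \lfloor n^2/4\rfloor-\lfloor(n-2)^2/4\rfloor-1$. The two-vertex step and the all-large-cliques base case are the missing ideas in your plan.
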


Before proving the theorem, we need the following lemma:

\begin{lem}\label{mainlemma}
Let $\mathcal{Q}$ be an edge clique partition of a graph $G$, then
we have that if $\mathcal{F}(\mathcal{Q})=\{S_\mathcal{Q}(v):~v\in
V(G)\}$ has two identical sets, say $S_\mathcal{Q}(u)$ and
$S_\mathcal{Q}(v)$, then the clique $Q_{uv}$ in $\mathcal{Q}$
simultaneously containing $u,v$ is a maximal clique in $G$. Note
that $Q_{uv}$ has $u$ and $v$ as its monopolized elements, that is,
$u,v$ are in no clique of $\mathcal{Q}$ except $Q_{uv}$.
\end{lem}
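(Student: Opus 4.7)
The plan is to exploit the bijection between cliques of $\mathcal{Q}$ containing a vertex $w$ and elements of $S_{\mathcal{Q}}(w)$ in a very direct way. From $S_{\mathcal{Q}}(u)=S_{\mathcal{Q}}(v)$, the first step is to argue that both sets are equal to $\{Q_{uv}\}$; the rest follows from maximality by a single one-line contradiction.

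First I would dispose of the trivial-clique case. If $u$ were isolated in $G$, the convention on clique partitions forces a trivial clique $\{u\}\in\mathcal{Q}$, hence $S_{\mathcal{Q}}(u)\ni\{u\}$; but $v\notin\{u\}$, so $\{u\}\notin S_{\mathcal{Q}}(v)$, contradicting the hypothesis. Thus $u$ (and symmetrically $v$) is not isolated, so $S_{\mathcal{Q}}(u)$ is non-empty. Pick any $Q\in S_{\mathcal{Q}}(u)$; by the assumed equality, $v\in Q$ too, so $Q$ contains the pair $\{u,v\}$. Because $\mathcal{Q}$ is a partition of the edge set, the pair $\{u,v\}$ lies in a unique clique of $\mathcal{Q}$, so in fact $u$ and $v$ are adjacent and $Q=Q_{uv}$. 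The same argument, applied to any other member of $S_{\mathcal{Q}}(u)$, forces it to also be $Q_{uv}$; hence $S_{\mathcal{Q}}(u)=S_{\mathcal{Q}}(v)=\{Q_{uv}\}$, which is exactly the ``monopolized'' assertion.

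It remains to show $Q_{uv}$ is maximal. Suppose for contradiction that some $w\in V(G)\setminus Q_{uv}$ is adjacent to every vertex of $Q_{uv}$; in particular $w$ is adjacent to $u$. The edge $\{u,w\}$ lies in some clique $Q^{*}\in\mathcal{Q}$, so $Q^{*}\in S_{\mathcal{Q}}(u)=\{Q_{uv}\}$, forcing $Q^{*}=Q_{uv}$. But then $w\in Q_{uv}$, contradicting the choice of $w$. Hence no such $w$ exists and $Q_{uv}$ is a maximal clique of $G$.

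I do not expect a serious obstacle here: the only subtle point is remembering that under the paper's convention each isolated vertex is assigned its own trivial clique, which is exactly what rules out $S_{\mathcal{Q}}(u)=S_{\mathcal{Q}}(v)=\emptyset$ and lets the rest of the argument proceed uniformly.
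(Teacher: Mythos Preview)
Your argument is correct. The organization differs from the paper's: the paper proves maximality and the monopolized property as two separate contradictions (for maximality it takes $w\in Q'\setminus Q_{uv}$ and exhibits a clique through $u,w$ not containing $v$ and one through $v,w$ not containing $u$; for the monopolized part it assumes $u$ lies in some $Q''\neq Q_{uv}$ and chases a neighbour $u'\notin Q_{uv}$), whereas you first pin down $S_{\mathcal{Q}}(u)=S_{\mathcal{Q}}(v)=\{Q_{uv}\}$ in one stroke via the observation that every clique of $\mathcal{Q}$ containing $u$ must also contain $v$ and hence equal $Q_{uv}$, and then derive maximality from that. Your route is a bit more economical, since once the monopolized statement is isolated the maximality is immediate; the paper's route, by contrast, treats the two conclusions symmetrically and never explicitly uses one to get the other. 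Either way the content is the same short pigeonhole on the unique clique covering the edge $uv$.
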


\begin{proof}
If there is a clique $Q'$ properly containing $Q_{uv}$ in $G$, say
vertex $w$ being in $Q'$ but not in $Q_{uv}$, then no clique in
$\mathcal{Q}$ can simultaneously contain the three vertices $u,v,w$.
Thus the clique in $\mathcal{Q}$ simultaneously containing $u,w$
doesn't contain $v$ and the clique in $\mathcal{Q}$ simultaneously
containing $v,w$ doesn't contain $u$, and therefore we must have
$S_\mathcal{Q}(u)\neq S_\mathcal{Q}(v)$.

If $u$, say, belongs to one clique $Q''$ in $\mathcal{Q}$ other than
$Q_{uv}$, then there is a vertex, say $u'$, adjacent to $u$ and not
in $Q_{uv}$. In case that $u'$ is not adjacent to $v$ we must have
$S_\mathcal{Q}(u)\neq S_\mathcal{Q}(v)$. In case that $u'$ is
adjacent to $v$, then no clique in $\mathcal{Q}$ can simultaneously
contain $u,v,u'$. Thus the clique in $\mathcal{Q}$ simultaneously
containing $u,u'$ doesn't contain $v$ and the clique in
$\mathcal{Q}$ simultaneously containing $u',v$ doesn't contain $u$,
and therefore we must have $S_\mathcal{Q}(u)\neq S_\mathcal{Q}(v)$.
\end{proof}

Then we are in a position to proceed the proof of
Theorem~\ref{similarMcGuinness}:
\begin{proof}
We use induction on $n$.

When $n=4$, it is easy to draw all the eleven different graphs on
four vertices, and to check that every representation of each of
them uses at most $\lfloor n^2/4\rfloor$ elements.

For the case $n=5$, note that $\lfloor 5^2/4\rfloor -\lfloor
4^2/4\rfloor =6-4=2$ and therefore we have two new elements in
proceeding from $n=4$ to $n=5$. We have the following four cases:

{\bf{Case~1}}: If $G^{(5)}$ has one vertex with degree 2 or less,
then we reduce $G^{(5)}$ to $G^{(4)}$ by deleting this vertex and
all edges incident to it. Note that this vertex form a maximal
clique in $G^{(5)}$ along with some edge in $G^{(4)}$ only if
$G^{(5)}$ is one of 13 non-isomorphic graphs in Figure~\ref{hollow},
where hollow circle denote this vertex and dashed lines denote the
edges incident to it.
\begin{figure}
\centering
\includegraphics[width=0.7\textwidth]{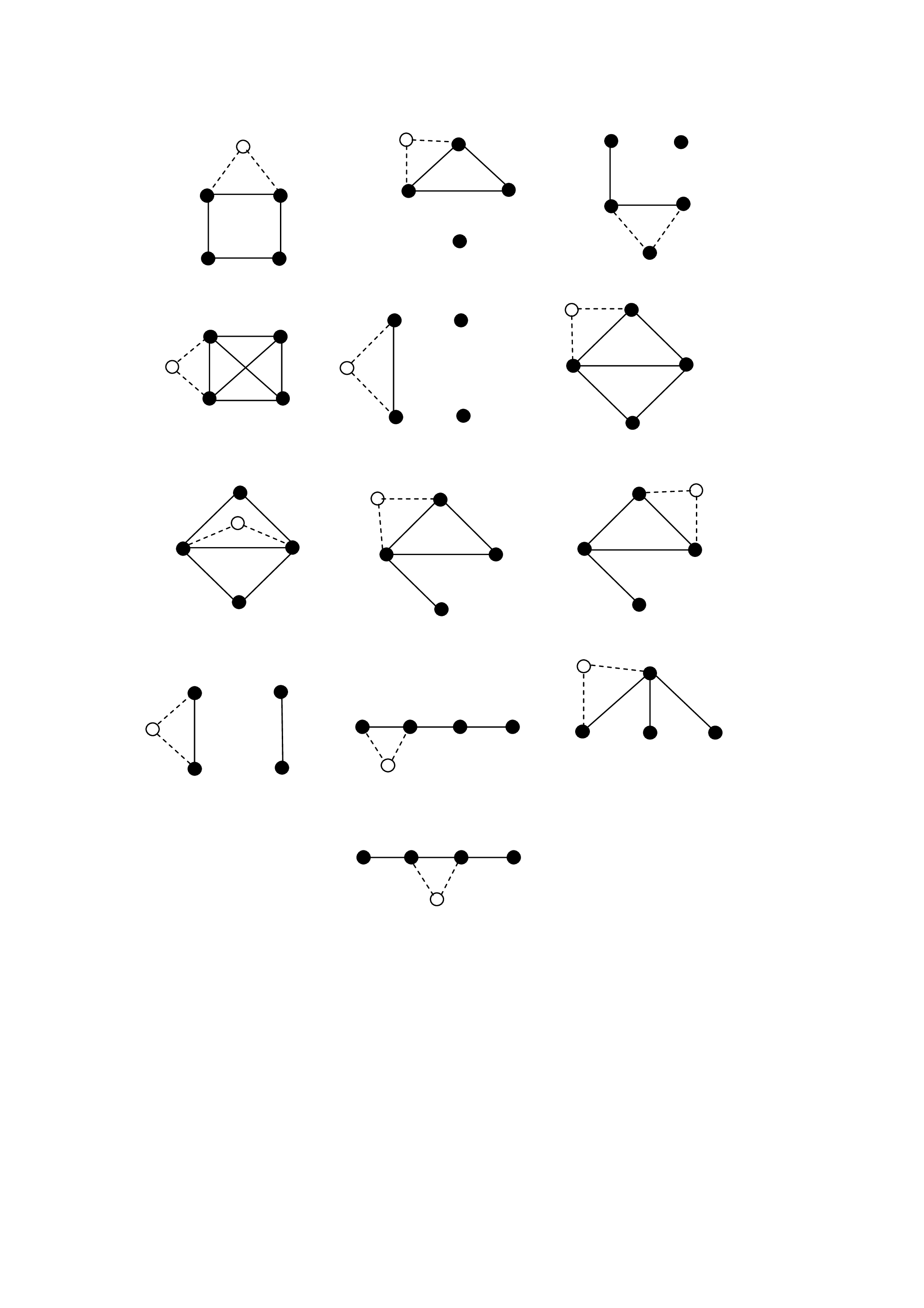}
\caption{One Case for Representations of Graphs on 5
Vertices}\label{hollow}
\end{figure}
It is easy to check that every representation of each one of them
uses at most $\lfloor 5^2/4\rfloor =6$ elements.

{\bf{Case~2}}: As for the case that there is no maximal clique in
$G^{(5)}$ simultaneously containing this vertex and some edge in
$G^{(4)}$, then in any greedy clique partition of $G^{(5)}$ we must
use all the edges incident to this vertex as members of this greedy
clique partition. Thus in this case, we may at first take a
representation of $G^{(4)}$, and then go back to $G^{(5)}$ using the
available two new elements to represent at most two edges incident
to this vertex. Then we may confirm that in this case all
representations of $G^{(5)}$ use at most $\lfloor 5^2/4 \rfloor =6$
elements.

{\bf{Case~3}}: As for the case that there is no edge in $G^{(5)}$
incident to this vertex, we may at first take a representation of
$G^{(4)}$ and then go back to $G^{(5)}$ using one new monopolized
element.

{\bf{Case~4}}: Due to above, now we need to consider only those
graphs on 5 vertices for which every vertex has degree greater than
or equal to 3. There are only three such graphs and they are easy to
be checked. (Please see Figure~\ref{v5}) Thus the case $n=5$ is
done, and we have proved the theorem for $n=4$ and $n=5$.\\

\begin{figure}[h]
\centering
\includegraphics[width=0.7\textwidth]{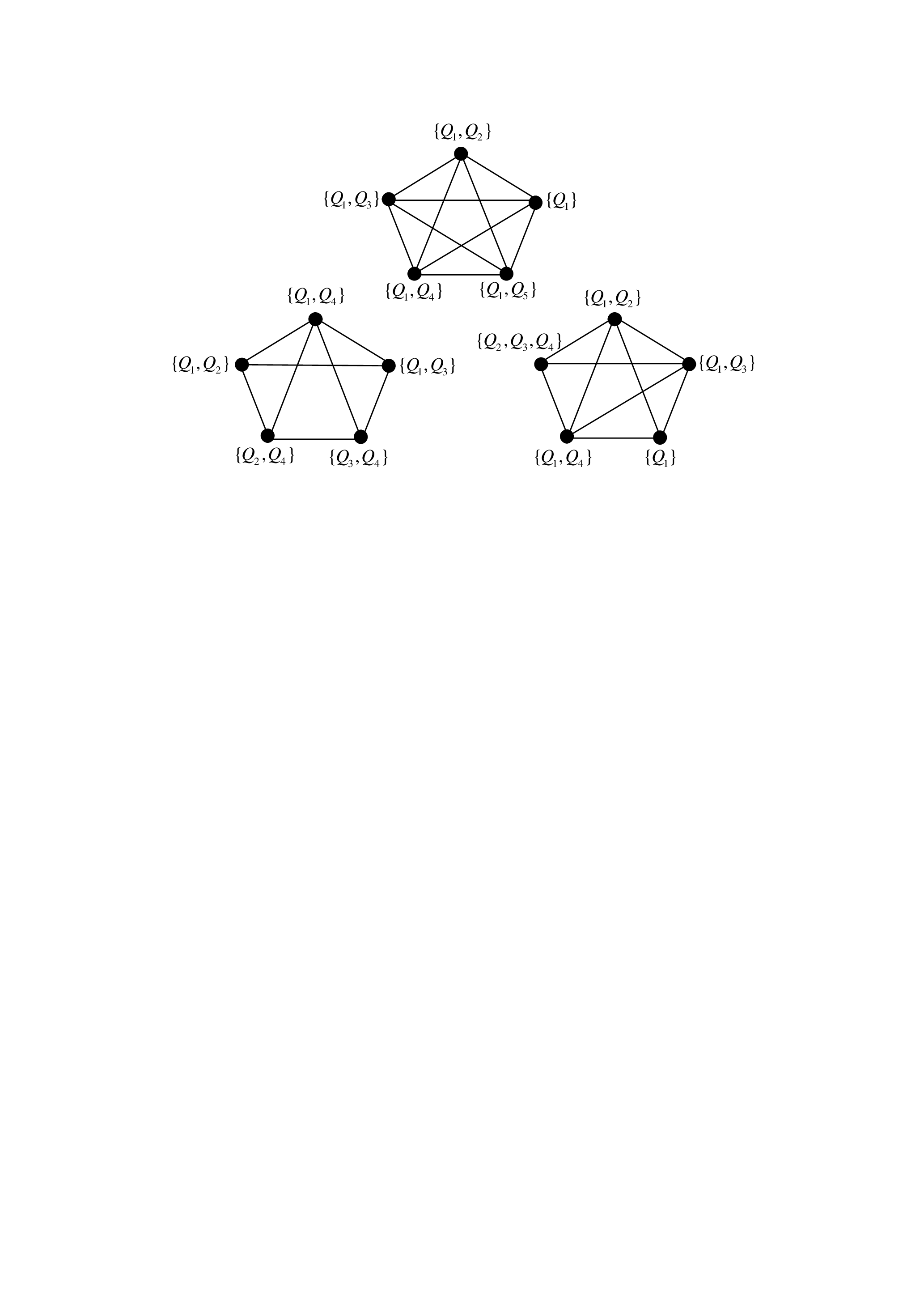}
\caption{Graphs on 5 Vertices Whose Vertices Have Degree $\geq 3$
with Corresponding Representations}\label{v5}
\end{figure}

Now let $\mathcal{F}$ be a representation of $G^{(n)}$ with $n\geq
6$ derived from $\mathcal{F}(\textbf{Q})$, where
$\textbf{Q}=\{Q_1,...,Q_m\}$ is a greedy clique partition of
$G^{(n)}$. Note that deleting $Q_j$ from the set $\textbf{Q}$ leaves
a greedy clique partition of $G-E(Q_j)$.

In case that each $Q_j$ has at least three edges, we have $m\leq {n
\choose 2}\big/3<n^2/6$. Assume for the time being that every $Q_i$
has exactly three edges, that is, is exactly a triangle. Now if
every triangle in $\textbf{Q}$ has at most one of its three vertices
of degree 2, then by Lemma~\ref{mainlemma} we do not need to use any
monopolized element for this greedy clique partition. If there is a
triangle in $\textbf{Q}$ with at least two of its three vertices of
degree 2, then recall that $G^{(n)}$ have at least six vertices, two
vertices of degree 2 in this triangle make $m$ to be less than or
equal to $\left({n \choose 2}/3\right)-2< (n^2/6)-2$. Thus although
we might need two more monopolized elements for this triangle, yet
in the same time we also have two less cliques (as $K_3$) in
$\textbf{Q}$. Besides, if there is a clique of cardinality $3+r$
where $r>0$ in $\textbf{Q}$, then despite that maybe we need $r$
more monopolized elements for this clique, yet in the same time by
the fact that ${3+r \choose 2}\geq 3(r+1)$ we also have $r$ less
cliques (as $K_3$)in $\textbf{Q}$. Note that ${3+r \choose 2}$ is
the number of edges in a clique of cardinality $3+r$ and $3(r+1)$ is
the total number of edges in $r+1$ triangles. In fact, we may need
rather $r+1$ or $r+2$ than $r$ more monopolized elements for this
clique of cardinality $3+r$. By Lemma~\ref{mainlemma}, we need to
use $r+2$ more monopolized elements for this clique only when either
this clique is an isolated clique or $G^{(n)}$ is itself a clique.
For the latter case, we use $n$ elements to represent $G^{(n)}$ and
note that $n^2/6\geq n$ for $n\geq 6$. As for the former case, we
lose all the edges joining this isolated clique to all the vertices
not on this isolated clique, therefore we lose at least 5 edges from
the calculated ${n \choose 2}$ edges and hence further lose at least
two cliques from the calculated $n^2/6$ cliques (as $K_3$). By
Lemma~\ref{mainlemma}, we need to use $r+1$ more monopolized
elements for this clique only when this clique has exactly $r+2$
vertices of degree $(3+r)-1$. In this case, this clique has a vertex
$v$ adjacent to one vertex, say $v'$, not in this clique, and all
vertices in this cliques other than $v$ are not adjacent to $v'$.
Therefore in $G^{(n)}$ we have $r+2\geq 3$ less edges than complete
graph $K_n$, and thus we have still one less triangle in
$\textbf{Q}$. Now we have brought to the conclusion that in case
that each $Q_j$ has at least three edges, we never use more than
$n^2/6$ elements to form a representation of $G^{(n)}$. Now we have
justified assuming some $Q_j$ is an edge $xy$. In case that
$d(x)=d(y)=1$, we may first take a representation of $G^{(n)}-x-y$
by the method of Theorem~\ref{similarErdos} using at most $\lfloor
(n-2)^2/4\rfloor$ elements, and then use two new elements for the
isolated edge $xy$ to form a representation for $G^{(n)}$ with at
most $\lfloor n^2/4\rfloor$ elements. Thus in this case every
representation of $G^{(n)}$ derived from $\mathcal{F}(\textbf{Q})$,
where $\textbf{Q}$ is any greedy clique partition of $G^{(n)}$, uses
at most $\lfloor n^2/4\rfloor$ elements.

As for the case that one of $x,y$ has degree more than one, in any
representation of $G^{(n)}$ we can not use any monopolized element
on $x$ or $y$. Now let $R$ consist of the members of
$\textbf{Q}-\{Q_j\}$ that are incident to $x$, and $S$ consist of
those incident to $y$. Then the set
$$\textbf{Q}'=\textbf{Q}-(R\cup S\cup \{Q_j\})$$ is a greedy clique partition of
$$G'\equiv (G^{(n)}-x-y)-\bigcup_{Q_i\in R \mbox{ or } S}E(Q_i),$$ except possibly leaving some isolated vertices
in $G'$ uncovered by any members of $\textbf{Q}'$. Recall that for
the present case, in $\mathcal{F}$ we never use any monopolized
element on $x,y$. Now if we can prove that every monopolized element
in $\textbf{S}(\mathcal{F})$ is always necessary for deriving a
representation of $G'$ from $\mathcal{F}(\textbf{Q}')$, then by
induction hypothesis we prove that
\begin{align}
|\mathcal{Q}(\mathcal{F})-(R\cup S\cup \{Q_j\})|\leq \lfloor
(n-2)^2/4\rfloor\tag{2}.
\end{align}

If in $\mathcal{F}$ we used one monopolized element on some vertex
$v$ not belonging to any member of $R\cup S$, then in
$\mathcal{F}(\textbf{Q})$, the set $S_\textbf{Q}(v)$ must be
identical with some $S_\textbf{Q}(u)$ where $u$ is also a vertex not
belonging to any member of $R\cup S$. Since both $u$ and $v$ do not
belong to any member of $R\cup S$, then
$S_{\textbf{Q}'}(u)=S_{\textbf{Q}'}(v)$ in
$\mathcal{F}(\mathcal{Q}')$. Thus this monopolized element is
necessary for deriving a representation of $G'$ from
$\mathcal{F}(\textbf{Q}')$.

If in $\mathcal{F}$ we used one monopolized element on some vertex
$v$ belonging to one member, say $Q_v$, of $R\cup S$. Then in
$\mathcal{F}(\textbf{Q})$, the set $S_\textbf{Q}(v)$ must be
identical with some $S_\textbf{Q}(u)$ where $u$ is also a vertex
belonging to $Q_v$. Now by Lemma~\ref{mainlemma} $v$ must have all
its neighbors in $Q_v$. Thus $v$ is an isolated vertex in $G'$. Thus
this monopolized element is necessary for deriving a representation
of $G'$ from $\mathcal{F}(\textbf{Q}')$. Thus we have proved the
statement (2).

Now it suffices to prove that $$|R\cup S|\leq n-2,$$ since
$$n-2\leq \lfloor n^2/4\rfloor -\lfloor (n-2)^2/4\rfloor -1.$$
We prove this by choosing distinct vertices in $V(G)-\{x,y\}$ from
the vertex sets of the members of $R\cup S$. Note that since each
edge is covered exactly once in a clique partition, each $v\notin
\{x,y\}$ appears once in $R$ if $v$ is adjacent to $x$ and once in
$S$ if $v$ is adjacent to $y$. Consider $Q_1\in R$. If $Q_1$
contains a vertex $v$ not adjacent to $y$, then we choose such a $v$
for $Q_1$. If all vertices in $Q_1$ are adjacent to $y$, then we
choose a vertex $v\in Q_1$ such that $vy$ belongs to the first
member of $\textbf{Q}$, say $Q_2$, which contains both $y$ and some
vertex of $Q_1$. Note that $Q_2$ is the only member of $S$
containing $v$.

Now we have two cases, that is, either that $Q_1$ precedes $xy$ in
$\textbf{Q}$ or that $xy$ precedes $Q_1$ in $\textbf{Q}$. For the
first case, since $Q_1$ and $xy$ are maximal while chosen, $Q_2$
must precedes $Q_1$ in $\textbf{Q}$ for otherwise from the
aforementioned hypothesis that all vertices in $Q_1$ are adjacent to
$y$ and $Q_1$ precedes $xy$ in $\textbf{Q}$, $Q_1$ should have
contained $y$ and hence $xy$. For the second case, since $xy$ is
maximal while chosen, one of $Q_1,Q_2$ precedes $xy$ or otherwise
$xy$ should have contained $v$. Thus in this case $Q_2$ precedes
$Q_1$ in $\textbf{Q}$. Note that in both cases, we have that $Q_2$
precedes both of $Q_1$, $xy$ in $\textbf{Q}$.

For the members of $S$, similarly as above choose vertices by
reversing the roles of $x$ and $y$.

In above we have shown that if $v$ belongs to some $Q_1\in R$ and to
some $Q_2\in S$, and $v$ is chosen for one of them, then the one for
which it is chosen occurs after the other one in the ordered set
$\textbf{Q}$. Hence no vertex is chosen twice. Thus we conclude that
$$|R\cup S|\leq n-2.$$
\end{proof}

\section{\bf{Conclusion Remarks}}

The edge clique partitions, as a special case of edge clique covers,
are served as great classifying and clustering tools in many
practical applications, therefore it is interesting to explore the
concept in more details.

One may work on the cases besides multifamily and family, say
antichain, uniform family etc. The greedy way to obtain these
variants also naturally gives the optimal upper bounds for the
corresponding intersection numbers.

\end{document}